\theoremstyle{theorem}
\newtheorem{theorem}{Theorem}
\newtheorem{corollary}[theorem]{Corollary}
\title{Scaled Arndt Compositions}
\author{Brian Hopkins}
\address{Saint Peter's University, Jersey City, NJ 07306, USA}
\email{bhopkins@saintpeters.edu}
\author{Augustine Munagi}
\address{University of the Witwatersrand, Johannesburg 2000, South Africa}
\begin{document}

\maketitle

\begin{abstract}
Integer compositions restricted by inequalities on certain pairs of parts were first considered by J\"{o}rg Arndt in 2013 and several variations have been studied recently.  Here we consider a broad two-parameter generalization that scales the defining relation.  We connect these to compositions restricted to parts from certain congruence classes and establish the recurrence relations satisfied by the related counting sequences.  This provides new combinatorial interpretations to several documented integer sequences and also simple unrecorded sequences.  We conclude with suggestions for further study.
\end{abstract}

\section{Introduction}

A composition of a positive number $n$ is an ordered sequences of positive integers $(c_1, \ldots, c_t)$ called parts whose sum is $n$.  The study of compositions dates to ancient India at least with connections to the Fibonacci sequence defined by $F_0 = 0$, $F_1 = 1$, and $F_n = F_{n-1} + F_{n-2}$ for $n \ge 2$ \cite{h26}.

In 2013, J\"{o}rg Arndt observed that there are $F_n$ compositions of $n$ that satisfy $c_{2i-1} > c_{2i}$ for each $i$, although the first proof was published in 2022 by the first named author and Tangboonduangjit \cite{ht22}.  Those authors also considered the condition $c_{2i-1} > c_{2i} + k$ for an arbitrary integer $k$ \cite{ht22,ht23}.  Several other generalizations and variations of Arndt's conditions have been studied \cite{cr24,ht25,nacr25,p23}. 

Here, we consider a broad generalization of Arndt's condition, $s c_{2i-1} \ge t c_{2i}$ for positive integers $s$ and $t$.  We establish a bijection to compositions restricted to parts in particular congruence classes which leads to a recurrence relation.  We close by mentioning the related condition $s c_{2i-1} \ge t c_{2i} + k$ for an arbitrary integer $k$.

\section{Scaled Arndt compositions}
Given positive integers $s$ and $t$, let $A_{s,t}(n)$ be the compositions of $n$ that satisfy $s c_{2i-1} > t c_{2i}$ for each $i$; for a composition with an odd number of parts, the inequality is vacuously true for the final ``pair.''  Clearly $A_{s,t}(n) = A_{ks,kt}(n)$ for each positive integer $k$, so we may assume that $s$ and $t$ are relatively prime.  Let $a_{s,t}(n) = | A_{s,t}(n) |$.  Arndt's original claim in this notation is $A_{1,1}(n) = F_n$.

For example, 
\[A_{2,3}(6) = \{(6), (5, 1), (4, 2), (4, 1, 1), (3, 1, 2), (2, 1, 3), (2, 1, 2, 1)\};\]
note that $A_{1,1}(6)$ also includes $(3,2,1)$ but $2\cdot3 \not> 3\cdot 2$.  As an example of a more permissive set, note that $(3,4) \in A_{3,2}(7)$ since $3\cdot3 > 2\cdot4$ while the increasing pair $(3,4) \notin A_{1,1}(7)$.

\begin{theorem} \label{big}
For relatively prime positive integers $s$ and $t$, the count $a_{s,t}(n)$ equals the number of compositions of $n$ into parts congruent to \[r+ \left\lceil \frac{rt+1}{s} \right\rceil \bmod (s+t)\] for each $0 \le r \le s-1$.
\end{theorem}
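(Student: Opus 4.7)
The plan is to proceed via generating functions. Let $A(x) := \sum_{n \ge 0} a_{s,t}(n) x^n$ and let $T$ denote the set of positive integers congruent to $r + \lceil (rt+1)/s \rceil \bmod (s+t)$ for some $0 \le r \le s-1$. The goal is to show $A(x) = 1 / (1 - \sum_{k \in T} x^k)$, since the right-hand side is the generating function for compositions with parts in $T$.

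Every scaled Arndt composition decomposes uniquely as a (possibly empty) sequence of admissible pairs $(c_{2i-1}, c_{2i})$ with $s c_{2i-1} > t c_{2i}$, followed by an optional singleton. Writing $P(x) := \sum_{a,b \ge 1,\ sa > tb} x^{a+b}$ for the pair generating function and using $1 + x/(1-x) = 1/(1-x)$ for the singleton contribution, the symbolic method gives $A(x) = 1/[(1-x)(1-P(x))]$. Grouping by $m = a+b$ and counting $a \in \{1, \ldots, m-1\}$ with $a(s+t) > tm$, i.e.\ $a \ge h(m) := \lceil (tm+1)/(s+t) \rceil$, yields $P(x) = \sum_{m \ge 1} (m - h(m)) x^m$.

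A short telescoping using $h(0) = h(1) = 1$ then simplifies $(1-x)(1-P(x))$ to $1 - \sum_{m \ge 1}[1 - (h(m) - h(m-1))] x^m$. Since $(t(m-1), tm]$ has length $t < s+t$ and contains at most one multiple of $s+t$, $h(m) - h(m-1)$ lies in $\{0, 1\}$, equaling $0$ precisely when $tm \bmod (s+t) \ge t$. Thus the theorem reduces to showing $T = \{k \ge 1 : tk \bmod (s+t) \ge t\}$; since $\gcd(t, s+t) = \gcd(s,t) = 1$, the right-hand side is a union of exactly $s$ residue classes modulo $s+t$.

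The final and most delicate step matches these residues to the list $\{k_r : 0 \le r \le s-1\}$ where $k_r := r + \lceil (rt+1)/s \rceil$. Writing $rt + 1 = sq + \rho$ with $\rho \in \{1, \ldots, s\}$ yields $k_r = r + q + 1$ and $s k_r = r(s+t) + (s+1-\rho)$; since $t \equiv -s \pmod{s+t}$, this gives $t k_r \equiv t + \rho - 1 \pmod{s+t}$, a residue in $\{t, \ldots, s+t-1\}$. As $r$ ranges over $\{0, \ldots, s-1\}$, coprimality of $s$ and $t$ ensures $\rho$ ranges over $\{1, \ldots, s\}$ without repetition, so $\{k_r \bmod (s+t)\}$ consists of $s$ distinct residues, exhausting the target set. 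The main obstacle is this ceiling-function arithmetic; once it is correctly set up, the rest of the argument is essentially mechanical.
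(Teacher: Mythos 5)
Your argument is correct, but it takes a genuinely different route from the paper. The paper proves the theorem by an explicit sum-preserving bijection: each admissible pair $(a,b)$ with $b=qs+r$ is sent to a block $\bigl(1^{a-qt-\lceil (rt+1)/s\rceil},\, q(s+t)+r+\lceil (rt+1)/s\rceil\bigr)$, and the inverse map is written down directly. You instead work entirely with generating functions: decomposing a scaled Arndt composition into admissible pairs plus an optional trailing singleton gives $A(x)=1/[(1-x)(1-P(x))]$, and the telescoping of the pair-counting function $h(m)=\lceil (tm+1)/(s+t)\rceil$ shows that the denominator is $1-\sum_{k\in T'}x^k$ where $T'=\{m: tm \bmod (s+t)\ge t\}$; the residue bookkeeping in your last step (verified: $tk_r\equiv t+\rho-1$ with $\rho=(rt+1)\bmod s$ running over $\{1,\dots,s\}$ by coprimality) correctly identifies $T'$ with the paper's set of congruence classes. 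Each approach has its merits: your computation is self-contained and essentially hands you Corollary~\ref{rec} for free, since the denominator $1-x^{s+t}-\sum_r x^{m_r}$ of the paper's generating function is exactly $(1-x^{s+t})\cdot(1-\sum_{k\in T}x^k)/(1-x^{s+t})$ unwound; the paper's bijection, by contrast, yields an explicit element-level correspondence (as displayed in Table~\ref{big23}) and makes the appearance of the residues $r+\lceil (rt+1)/s\rceil$ transparent rather than something to be verified after the fact. One stylistic note: your identification of the $s$ residue classes is indirect (two sets of size $s$, one contained in the other), whereas the paper's map produces each class constructively; but there is no gap.
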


\begin{proof}
Given a composition in $A_{s,t}(n)$, for each pair $(c_{2i-1},c_{2i}) = (a,b)$, write $b=qs+r$ with $0 \le r \le s-1$.  The condition $s c_{2i-1} > t c_{2i}$ implies $a \ge qt + \lceil (rt+1)/s \rceil$.  Make the assignment
\[(a,b) \mapsto \left(1^{a-qt - \lceil (rt+1)/s\rceil}, q(s+t) + r + \left\lceil\frac{rt+1}{s}\right\rceil \right)\]
which preserves the sum $a+qs+r$.  These images can have any number of parts 1 and at most one larger part which is congruent to $r + \lceil (rt+1)/s\rceil \bmod (s+t)$ for $1 \le r \le s-1$.  (Note that, for any $s$ and $t$, the value for $r=0$ is 1.)

Given a composition with parts satisfying the modulo $(s+t)$ congruence, decompose it into subsequences of the form $(1^c, d)$ where $c \ge 0$ (that is, there may be no parts 1 between larger parts) and $d = q(s+t) + (r + \lceil (rt+1)/s \rceil)$ for some $r$ with $1 \le r \le s-1$.  Make the assignment
\[(1^c, d) \mapsto \left( c + qt + \left\lceil\frac{rt+1}{s}\right\rceil, qs + r \right)\]
which preserves the sum $c+d$ and produces a pair satisfying $s c_{2i-1} > t c_{2i}$.  If the final subsequence consists entirely of $c$ parts 1, then its image is the singleton $c$.  Combining these pairs (and possible singleton) gives the scaled Arndt composition.

It is clear that the maps are inverses and thus establish a bijection.
\end{proof}

For example, $(s,t) = (2,3)$ gives parts congruent modulo 5 to
\[0 + \left\lceil \frac{0\cdot3 + 1}{2} \right\rceil = 1  \text{ or } 1 + \left\lceil \frac{1\cdot3+1}{2} \right\rceil = 3.\]
Write $C_{1,3(5)}(n)$ for the compositions of $n$ restricted to parts congruent to 1 or 3 modulo 5.  Table \ref{big23} shows the correspondence between $A_{2,3}(6)$ and $C_{1,3(5)}(6)$ given by Theorem \ref{big}.

\begin{table}
\caption{The $n=6$ bijection of Theorem \ref{big} using the shorthand notation $1^j$ for $j$ consecutive parts 1.} \label{big23}
\centering
\setlength{\tabcolsep}{5pt}
\renewcommand{\arraystretch}{1.5}
\begin{tabular}{c|ccccccc}
$A_{2,3}(6)$ & $(6)$ & $(5, 1)$ &  $(4, 2)$ &  $(4, 1, 1)$ &  $(3, 1, 2)$ &  $(2, 1, 3)$ &  $(2, 1, 2, 1)$ \\ \hline
$C_{1,3(5)}(6)$ & $(1^6)$ & $(1^3,3)$ & $(6)$ & $(1,1,3,1)$ & $(1,3,1,1)$ & $(3,1^3)$ & $(3,3)$
\end{tabular}
\end{table}

Table \ref{bigex} gives the list of congruence classes and moduli given by Theorem \ref{big} for small relatively prime $s$ and $t$.

\begin{table}[t]
\centering
\caption{Examples of congruence classes and moduli given by Theorem \ref{big} results.  For instance, $a_{2,3}(n)$ equals the number of compositions with parts congruent to 1 or 3 modulo 5 as detailed in Table \ref{big23}.} \label{bigex}
\setlength{\tabcolsep}{5pt}
\renewcommand{\arraystretch}{1.25}
\begin{tabular}{c|ccccc}
$s \backslash t$ & 1 & 2 & 3 & 4 & 5  \\ \hline
1 & $1$ (2) & $1$ (3) & $1$ (4) & $1$ (5) & $1$ (6)  \\
2 & $1,2$ (3) & & $1,3$ (5) & & $1,4$ (7)  \\
3 & $1,2,3$ (4) & $1,2,4$ (5) & & $1,3,5$ (7) & $1,3,6$ (8)  \\
4 & $1,2,3,4$ (5) & & $1,2,4,6$ (7) & &  $1,3,5,7$ (9)  \\
5 & $1,2,3,4,5$ (6) & $1,2,3,5,6$ (7) & $1,2,4,5,7$ (8) & $1,2,4,6,8$ (9) &  
\end{tabular}
\end{table}

The bijection to compositions with parts determined by congruence classes in Theorem \ref{big} allows us to determine a recurrence for each sequence $a_{s,t}(n)$.

\begin{corollary} \label{rec}
For relatively prime positive integers $s$ and $t$, the count $a_{s,t}(n)$ satisfies
\[a_{s,t}(n) = \left( \sum_{r=1}^{s-1} a_{s,t}\!\left(n-r- \left\lceil\frac{rt+1}{s}\right\rceil \right) \right) + a_{s,t}(n-s-t).\]
\end{corollary}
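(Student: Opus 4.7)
The plan is to use Theorem~\ref{big} to translate the recurrence into a statement about compositions with parts from a restricted set, then prove that version by conditioning on the first part and telescoping by the common difference $s+t$.

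First I would set $b_r = r + \lceil (rt+1)/s \rceil$ for $0 \le r \le s-1$, so that by Theorem~\ref{big}, $a_{s,t}(n)$ equals the number of compositions of $n$ whose parts come from the set
\[P = \{b_r + j(s+t) : 0 \le r \le s-1,\ j \ge 0\}.\]
The key structural observation is that $P$ is a disjoint union of $s$ arithmetic progressions of common difference $s+t$, anchored by the base values $b_0 = 1, b_1, \ldots, b_{s-1}$, each of which lies in $[1, s+t]$.

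Next, conditioning on the first part of such a composition, I would obtain the naive expansion
\[a_{s,t}(n) = \sum_{r=0}^{s-1} \sum_{j \ge 0} a_{s,t}\!\bigl(n - b_r - j(s+t)\bigr),\]
where the inner sum is understood to include only nonnegative arguments. To telescope this into a finite recurrence, I would split off the $j = 0$ terms, contributing $\sum_{r=0}^{s-1} a_{s,t}(n - b_r)$, and then observe that the $j \ge 1$ terms, after the reindexing $j \mapsto j - 1$, reassemble into the naive first-part expansion of $a_{s,t}(n - s - t)$. Equivalently, the map ``subtract $s+t$ from the first part'' is a bijection between the compositions of $n$ in $P$ whose first part exceeds $s+t$ and the compositions of $n-s-t$ in $P$. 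This produces the $a_{s,t}(n - s - t)$ summand appearing in the corollary.

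The main piece requiring care is the reindexing bijection: I would need to verify that the pairs $(r,j)$ indexing large first parts of compositions of $n$ correspond bijectively (via $j \mapsto j - 1$) to the pairs indexing all first parts of compositions of $n - s - t$, and that the boundary condition $n - b_r - j(s+t) \ge 0$ behaves correctly under the shift. Once that is checked, the two contributions combine to give the stated recurrence, with the $r = 0$ base-part term (namely $a_{s,t}(n-1)$, since $b_0 = 1$) packaged into the first sum. No substantial obstacle arises; the argument is simply a careful accounting of the infinite first-part recurrence $\sum_{p \in P} a_{s,t}(n-p)$ telescoped by the period $s+t$ of the part set.
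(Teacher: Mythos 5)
Your argument is correct, but it proceeds quite differently from the paper's. The paper works entirely with generating functions: it quotes the formula $1/(1-\sum_{a\in A}x^a)$ for compositions with parts in a set $A$, sums a geometric series over each congruence class to get $\sum_{a\in A}x^a = \bigl(\sum_{r=0}^{s-1}x^{m_r}\bigr)/(1-x^{s+t})$, and reads the recurrence off the resulting denominator $1-x^{s+t}-\sum_{r=0}^{s-1}x^{m_r}$. You instead give a direct combinatorial derivation: condition on the first part to get the infinite recurrence $a_{s,t}(n)=\sum_{p\in P}a_{s,t}(n-p)$ and telescope by the period $s+t$, identifying the $j\ge 1$ tail with the full first-part expansion of $a_{s,t}(n-s-t)$. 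These are the same algebra in different clothing (your telescoping is exactly multiplication of the denominator by $1-x^{s+t}$), but your version makes the bijective content explicit (``subtract $s+t$ from a large first part'') and avoids citing the generating-function formula; the price is that you must actually check the details you flag --- that the base values $b_r$ are distinct and lie in $[1,s+t]$ (true, since $b_r$ is strictly increasing with $b_0=1$ and $b_{s-1}\le s+t$), and that the shift argument is only valid for $n>s+t$, smaller $n$ supplying initial conditions, as the paper also remarks. One point to reconcile: both your derivation and the paper's denominator produce the sum over $0\le r\le s-1$, i.e., they include the term $a_{s,t}(n-1)$ coming from $m_0=b_0=1$; the printed statement's lower limit $r=1$ omits that term and already fails for $s=t=1$ (it would assert $F_n=F_{n-2}$), so it must be a typo for $r=0$. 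Your closing remark that the $r=0$ term is ``packaged into the first sum'' therefore does not match the statement as printed; make sure the version you state is the one you prove.
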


\begin{proof}
The generating function for the number of compositions with parts restricted to a set $A$ is $1 / (1 - \sum_{a \in A} x^a)$ \cite[Thm. 3.13]{hm10}.

Let $m_r = r + \lceil (rt+1)/s \rceil$ so that our set $A$ is all parts congruent to $m_r$ modulo $(s+t)$ for some $r$ satisfying $0 \le r \le s-1$.  Then 
\[ \sum_{a \in A} x^a = \frac{\sum_{r=0}^{s-1} x^{m_r}}{1-x^{s+t}} \]
so that the generating function for $a_{s,t}(n)$ is 
\[ \frac{1}{1 - \sum_{a \in A} x^a} = \frac{1}{1 - \frac{\sum_{r=0}^{s-1} x^{m_r}}{1-x^{s+t}}} = \frac{1-x^{s+t}}{1-x^{s+t} - \sum_{r=0}^{s-1} x^{m_r}}\]
whose denominator establishes the recurrence.
\end{proof}

Note that evaluating the first $(s+t)$ terms of the generating function in the proof of Corollary \ref{rec} gives the initial values of the recurrence.  (Since there are $2^{n-1}$ compositions of $n$ and we know $m_0 = 1$ is always an allowed part, $1 \le a_{s,t}(n) \le 2^{n-1}$ for all positive $n, s, t$ with $s$ and $t$ relatively prime.)

As examples of the generating functions developed in the proof of Corollary \ref{rec}, here are the cases where $s+t = 5$.  Recall the convention that there is one empty composition of zero.
\begin{align*}
\sum & \, a_{1,4}(n) q^n  = \sum c_{1(5)}(n) q^n  = \frac{1-q^5}{1-q-q^5} \\
& = 1 + q + q^2 + q^3 + q^4 + q^5 + 2 q^6 + 3 q^7 + 4 q^8 + 5 q^9 +  \cdots \\[1ex]
\sum & \, a_{2,3}(n) q^n  = \sum c_{1,3(5)}(n) q^n  = \frac{1-q^5}{1-q-q^3-q^5} \\
& = 1 + q + q^2 + 2 q^3 + 3 q^4 + 4 q^5 + 7 q^6 + 11 q^7 + 17 q^8 +  27 q^9 +  \cdots \\[1ex]
\sum & \, a_{3,2}(n) q^n  = \sum c_{1,2,4(5)}(n) q^n  = \frac{1-q^5}{1-q-q^2-q^4-q^5} \\
& = 1 + q + 2 q^2 + 3 q^3 + 6 q^4 + 10 q^5 + 19 q^6 + 34 q^7 + 62 q^8 +  112 q^9 + \cdots \\[1ex]
\sum & \, a_{4,1}(n) q^n  = \sum c_{1,2,3,4(5)}(n) q^n  = \frac{1-q^5}{1-q-q^2-q^3-q^4-q^5} \\
& = 1 + q + 2 q^2 + 4 q^3 + 8 q^4 + 15 q^5 + 30 q^6 + 59 q^7 + 116 q^8 +  228 q^9 + \cdots
\end{align*}

Several sequences $a_{s,t}(n)$ for particular $s$ and $t$ are found in the On-Line Encyclopedia of Integer Sequences.  In addition to the Fibonacci sequence, the sequences $a_{1,t}$ are \cite[A000930, A003269, A003520, A005708]{o} for $2 \le t \le 5$.  The sequences $a_{s,1}$ are, with offsets, \cite[A001590, A001631, A124312, A251706]{o} for $2 \le s \le 5$, sometimes called $s$-bonacci sequences.  The only other sequence from Table \ref{bigex} currently listed is $a_{3,4}(n)$ \cite[A117761]{o} with no combinatorial interpretation.  In all cases, the interpretation in terms of scaled Arndt compositions is new.  Table \ref{missing} gives sequences not currently listed in \cite{o}.

\begin{table}[h]
\centering
\caption{Sequences $a_{s,t}(n)$ with relatively prime $s$ and $t$ not currently in the On-Line Encyclopedia of Integer Sequences \cite{o} up to $s+t = 8$.} \label{missing}
\setlength{\tabcolsep}{5pt}
\renewcommand{\arraystretch}{1.25}
\begin{tabular}{c|rrrrrrrrrr}
$n$ & 1 & 2 & 3 & 4 & 5 & 6 & 7 & 8 & 9 & 10\\ \hline
$a_{2,3}(n)$ & 1 & 1 & 2 & 3 & 4 & 7 & 11 & 17 & 27 & 42 \\
$a_{3,2}(n)$ & 1 & 2 & 3 & 6 & 10 & 19 & 34 & 62 & 112 & 203 \\ \hline
$a_{2,5}(n)$ & 1 & 1 & 1 & 2 & 3 & 4 & 5 & 8 & 12 & 17 \\
$a_{4,3}(n)$ & 1 & 2 & 3 & 6 & 10 & 19 & 33 & 61 & 109 & 198 \\
$a_{5,2}(n)$ & 1 & 2 & 4 & 7 & 14 & 27 & 51 & 99 & 190 & 365 \\ \hline
$a_{3,5}(n)$ & 1 & 1 & 2 & 3 & 4 & 7 & 11 & 16 & 26 & 41\\
$a_{5,3}(n)$ & 1 & 2 & 3 & 6 & 11 & 20 & 37 & 67 & 124 & 227
\end{tabular}
\end{table}

\section{Affine scaled Arndt compositions}

A natural next step in the study, following \cite{ht22,ht23} for Arndt's original compositions, is to consider compositions satisfying
$ s c_{2i-1} > t c_{2i} + k$
for each positive $i$ where $s$ and $t$ are relatively prime positive integers and $k$ is any integer.  We have established a few partial results, but encourage research into a unified treatment of this problem.


\begin{thebibliography}{9}

\bibitem{cr24}
D. F. Checa, J. L. Ram\'irez, Arndt compositions: A generating function approach, \textit{Integers} 24 (2024) A35, 18 pp.

\bibitem{hm10}
S. Heubach, T. Mansour, \textit{Combinatorics of Compositions and Words}, CRC Press, Boca Raton, 2010.

\bibitem{h26}
B. Hopkins, Classical Fibonacci compositions, accepted  \textit{Fibonacci Quart.} doi 10.1080/00150517.2025.2556149.

\bibitem{ht22}
B. Hopkins, A. Tangboonduangjit, Verifying and generalizing Arndt's compositions, \textit{Fibonacci Quart.} 60 (2022) 181--186.

\bibitem{ht23}
B. Hopkins, A. Tangboonduangjit, Arndt and De Morgan integer compositions, preprint (2023) arXiv:2307.12434.

\bibitem{ht25}
B. Hopkins, A. Tangboonduangjit, Arndt and Carlitz integer compositions, in preparation (2025).

\bibitem{nacr25}
M. L. Nadji, M. Ahmia, D. F. Checa, J. L. Ram\'irez, Arndt compositions with restricted parts, palindromes, and colored variants, \textit{J. Integer Seq.} 28 (2025) 25.3.6.

\bibitem{o}
OEIS Foundation Inc., \textit{The On-Line Encyclopedia of Integer Sequences}, 2025, oeis.org.

\bibitem{p23}
H. Prodinger, Arndt--Carlitz compositions, arXiv:2312.05081.

\end{thebibliography}
\end{document}